\newtoks\prt
\numberwithin{equation}{section}
\newtheorem{thm}{Theorem}[section]
\newtheorem{question}[thm]{Question}
\newtheorem{lemma}[thm]{Lemma}
\newtheorem{cor}[thm]{Corollary}
\newtheorem*{proclaim}{\the\prt}
\theoremstyle{definition}
\def\eqn#1$$#2$${\begin{equation}\label#1#2\end{equation}}
\def\fra{\mathfrak{A}}
\def\A{\mathcal A}
\def\B{\mathcal B}
\def\M{\mathcal M}
\def\ce{\mathbb C}
\def\cs{\operatorname{cs}}
\def\lin{Lindel\"of}
\def\co{\operatorname{co}}
\def\ep{\varepsilon}
\def\er{\mathbb R}
\def\ov{\overline}
\def \Bos {\operatorname{Bos}}
\def \ext {\operatorname{ext}}
\def\span{\operatorname{span}}
\def\wh{\widehat}
\def \reg {\partial _{\kern1pt\text{reg}}}
\def\di{\,\mbox{\rm d}}
\newcommand{\norm}[1]{\left\|#1\right\|}
\newcommand{\abs}[1]{\left| #1  \right|}
\renewcommand{\Re}{\operatorname{Re}}
\newcommand{\setsep}{;\,}
\newcommand{\eqdef}{\ \resizebox{4mm}{3mm}{\mbox{$\overset{\mathrm{def}}{=\joinrel=}$}}\ }
\newcommand{\zlomek}[2]{\mbox{$\frac{#1}{#2}$}}
\newcommand{\kolecko}{\resizebox{1mm}{1mm}{$\circ$}}
\newcommand{\acseg}{\mbox{$\,a,c\raisebox{0.5mm}{$^{\hspace{-5.65mm}\kolecko\hspace{-0.2mm}\raisebox{0.5mm}{\rule{5mm}{0.25pt}}\hspace{-0.2mm}\kolecko}$}$}}
\newcommand{\aeseg}{\mbox{$\,a,e\raisebox{0.5mm}{$^{\hspace{-5.65mm}\kolecko\hspace{-0.2mm}\raisebox{0.5mm}{\rule{5mm}{0.25pt}}\hspace{-0.2mm}\kolecko}$}$}\!}
\newcommand{\ccseg}{\mbox{$\,c_1,c_2\raisebox{0.5mm}{$^{\hspace{-8.3mm}\kolecko\hspace{-0.2mm}\raisebox{0.5mm}{\rule{7mm}{0.25pt}}\hspace{-0.2mm}\kolecko}$}$}}
\newcommand{\bbseg}{\mbox{$\,b_1,b_2\raisebox{1.1mm}{$^{\hspace{-8.6mm}\kolecko\hspace{-0.2mm}\raisebox{0.5mm}{\rule{7mm}{0.25pt}}\hspace{-0.2mm}\kolecko}$}$}}
\newcommand{\ceseg}{\mbox{$\,c,e\raisebox{0.5mm}{$^{\hspace{-5.45mm}\kolecko\hspace{-0.2mm}\raisebox{0.5mm}{\rule{5mm}{0.25pt}}\hspace{-0.2mm}\kolecko}$}$}}
\begin{document}

\title[Minimum principle]{The minimum principle for affine functions with the point of continuity property and  isomorphisms of spaces of continuous affine functions}

\author{Petr Dost\'al and Ji\v r\'\i\ Spurn\'y}

\address{Charles University\\
Faculty of Mathematics and Physics\\
Department of Probability and Mathematical Statistics \\
Sokolovsk\'{a} 83, 186 \ 75\\Praha 8, Czech Republic}
\email{dostal@karlin.mff.cuni.cz}

\address{Charles University\\
Faculty of Mathematics and Physics\\
Department of Mathematical Analysis \\
Sokolovsk\'{a} 83, 186 \ 75\\Praha 8, Czech Republic}
\email{spurny@karlin.mff.cuni.cz}

\subjclass[2010]{46A55; 46B03}

\keywords{compact convex set; affine function, point of continuity property, Banach-Stone theorem.}

\thanks{The second author was supported by the Research grant  GA\v{C}R 17-00941S}

\begin{abstract}
Let  $X$ be a compact convex set and let $\ext X$ stand for the set of extreme points of $X$. We show that if $f\colon X\to\er$ is an affine function with the point of continuity property such that $f\le 0$ on $\ext X$, then $f\le 0$ on $X$.

As a corollary of this minimum principle  we obtain a generalization of a~theorem by H.B.~Cohen and C.H.~Chu by proving the following result. Let $X,Y$ be compact convex sets such that every extreme point of $X$ and $Y$ is a~weak peak point and let $T\colon \fra^c(X)\to \fra^c(Y)$ be an isomorphism such that $\norm{T}\cdot \norm{T^{-1}}<2$. Then $\ext X$ is homeomorphic to $\ext Y$.
\end{abstract}

\maketitle


\section{The minimum principle}

We  work within the framework of real vector spaces.
If $X$ is a compact convex set in a Hausdorff locally convex space and $\ext X$ is the set of all extreme points of $X$, the classical results assert that any semicontinuous  affine function $f\colon X\to\er$ satisfying $f\le 0$ on $\ext X$ is actually smaller or less then $0$ on $X$ (see e.g. \cite[Corollary~4.8 and Section~3.9]{lmns}). A generalization of this minimum principle can be found in \cite{sray} (see also \cite[Section 10.8]{lmns}). It is well known that any semicontinuous function $f\colon X\to \er$ has the \emph{point of continuity property}, i.e., $f|_F$ has a point of continuity for each $F\subseteq X$ closed (see \cite{koumou} or \cite[Theorem~A.121]{lmns}). The first goal of our paper is a proof of the following result.

\begin{thm}
\label{min}
Let $X$ be a compact convex set and $f\colon X\to \er$ be an affine function satisfying the point of continuity property. If $f\le 0$ on $\ext X$, then it follows that $f\le 0$ on $X$.
\end{thm}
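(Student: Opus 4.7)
The plan is to argue by contradiction. Suppose there is $x_0\in X$ with $f(x_0)>0$. Paralleling the proof in the upper semicontinuous case, one would like to realize $x_0$ as the barycenter of a measure ``concentrated on $\ext X$'' and to transport the strict positivity $f(x_0)>0$ to $\ext X$ via the barycentric identity $f(x_0)=\int_X f\,d\mu$; together with $f\le 0$ on $\ext X$ this would yield the desired contradiction. The Choquet--Bishop--de~Leeuw theorem immediately furnishes a maximal probability Radon measure $\mu$ on $X$ with barycenter $x_0$.

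The next task is to establish the barycentric identity for $f$ itself, not merely for continuous affine functions. To this end I would use the point of continuity property to perform a transfinite derivation. Set $X_0=X$ and, for each ordinal $\alpha$, let $X_{\alpha+1}=X_\alpha\setminus U_\alpha^\ep$, where $U_\alpha^\ep$ denotes the relatively open subset of $X_\alpha$ on which $f|_{X_\alpha}$ has oscillation less than $\ep$. The point of continuity property forces the process to terminate at $\emptyset$, since otherwise the terminal closed set would carry a continuity point of its $f$-restriction, which would already lie in the next $U_\alpha^\ep$. The resulting transfinite partition of $X$ into Borel pieces of $f$-oscillation less than $\ep$, combined with Radonness of $\mu$ (which forces only countably many pieces to be non-null), yields $\mu$-measurability of $f$; an approximation argument along the partition, using affineness of $f$, then lifts the barycentric identity from continuous affine functions to $f$.

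The final and most delicate task is to prove $\mu(\{f>0\})=0$. By hypothesis $\{f>0\}\cap\ext X=\emptyset$, and maximality of $\mu$ ensures, via the Bishop--de~Leeuw theorem, that every Baire set containing $\ext X$ has full $\mu$-mass. The measurability structure of $\{f\le 0\}$ supplied by the point of continuity property is then used to find a Baire set $B$ with $\ext X\subseteq B\subseteq\{f\le 0\}$ modulo $\mu$-null sets. Hence $\mu(\{f>0\})=0$, and combining with the barycentric identity gives $f(x_0)=\int_X f\,d\mu\le 0$, a contradiction.

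The main obstacle will be this last step in the non-metrizable setting, where the level sets of $f$ are not \emph{a priori} Baire and a direct concentration statement for $\mu$ on $\ext X$ is unavailable. The point of continuity property enters precisely to bridge the gap by supplying enough Borel regularity to sandwich $\{f\le 0\}$ by Baire sets of full $\mu$-measure; verifying this sandwiching carefully is where the real work of the proof lies.
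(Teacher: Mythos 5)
Your approach has a genuine gap at exactly the step you defer to ``where the real work lies,'' and it is not a gap that can be filled: in a non-metrizable compact convex set, a maximal measure $\mu$ is only \emph{pseudo}-concentrated on $\ext X$, meaning $\mu(B)=1$ for every \emph{Baire} set $B\supseteq \ext X$; there is no corresponding statement for Borel sets, and the level set $\{f\le 0\}$ of a function with the point of continuity property (or even of a first Borel class function) need not contain, modulo $\mu$-null sets, any Baire set containing $\ext X$. Talagrand's example, cited in the paper, produces a strongly affine (even Borel) function vanishing on $\ext X$ but not on $X$; for that function the barycentric identity holds for every maximal measure, and yet $\mu(\{f\ne 0\})>0$ despite $\{f\ne 0\}\cap\ext X=\emptyset$. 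So the combination ``barycentric identity $+$ maximality $+$ $f\le 0$ on $\ext X$'' that your argument ultimately rests on is demonstrably insufficient, and the point of continuity property must enter in some way other than supplying measurability and a Baire sandwich. The authors themselves record that your scheme does work in the two cases where the concentration statement is available --- $X$ metrizable (where maximal measures satisfy $\mu(\ext X)=1$) or $f$ Baire --- which is precisely why those cases are dispatched in the introduction and the general theorem needs a different proof. (Your first two steps are fine but also not new work: that an affine function with the point of continuity property is strongly affine is quoted in the paper from the literature.)

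The paper's actual argument is geometric and category-theoretic rather than measure-theoretic. Assuming $F_\eta=\{f\ge\eta\}$ is nonempty for some $\eta>0$ and disjoint from $\ext X$, one sets $Y=\ov{F_\eta}$ and uses the point of continuity property together with a lemma of Ghoussoub--Godefroy--Maurey--Schachermayer to find a point $b\in\ext Y$ at which $f|_Y$ is continuous; continuity forces $b\in F_\eta$, hence $b\notin\ext X$, so $b$ lies in the open segment between two points of $X$. Pushing one endpoint to the boundary of $X$ and splitting it again, one builds a planar triangle $\Delta=\co\{a,c_1,c_2\}$ inside which an elementary affine computation (the paper's Lemma on triangles) shows $b$ is not an extreme point of $F_\eta\cap\Delta$, hence not of $Y$ --- a contradiction. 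If you want to salvage a measure-free intuition from your draft, it is this localization to a closed set where $f$ has a continuity point in the extreme boundary that replaces the unavailable concentration of maximal measures.
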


This result prompts a question on validity of the minimum principle for strongly affine functions on a compact convex set $X$. Let us recall that any probability Radon measure $\mu\in\M^1(X)$ possesses its \emph{barycenter} $r(\mu)\in X$, i.e., the point satisfying
\begin{equation}
\label{e:barf}
h(r(\mu))=\int_X h(x)\di\mu(x),\quad h\in \fra^c(X).
\end{equation}
Here $\fra^c(X)$ denotes the space of all real continuous affine functions on $X$.
Then $f\colon X\to \er$ is said to be \emph{strongly affine}, if for each $\mu\in\M^1(X)$, $f$ is $\mu$-integrable and
\eqref{e:barf} holds for $f$, i.e., $f(r(\mu))=\int_X f\di\mu.$
Obviously, any strongly affine function is affine and moreover, it is bounded (see \cite[Satz~2.1]{krause}). It is well known that any affine function with the point of continuity property is strongly affine (see e.g. \cite[Chapter 14]{phelps-choquet} or \cite[Theorem~4.21]{lmns}). As shown by M.~Talagrand in \cite{tal3} (see also \cite[Theorem 12.65]{lmns}), the minimum principle does not hold for strongly affine functions. Nevertheless, the following question seems to be open.

\begin{question}
Let $f\colon X\to\er$ be a strongly affine Borel function on a compact convex set $X$ such that $f\le 0$ on $\ext X$. Does it follow that $f\le 0$ on $X$?
\end{question}

Of course, the answer to this question is yes in case $X$ is metrizable since then one can use that for any $x\in X$ there exists a measure $\mu\in\M^1(X)$ with $r(\mu)=x$ such that $\mu(\ext X)=1$ (see \cite[Corollary~I.4.9]{alfsen}, \cite[Chapter 3]{phelps-choquet} or \cite[Section~3.8]{lmns}). Also,
if $f$ is a~Baire strongly affine function, the answer is also yes since for any $x\in X$ there exists a~measure $\mu\in\M^1(X)$ with $r(\mu)=x$ such that $\mu(B)=1$ for each $B\supseteq \ext X$ Baire.

We recall a particular class of Borel functions that satisfy the point of continuity property. For a topological space $X$, let $\Bos(X)$ denote the algebra generated by closed (equivalently open) sets in $X$.  If $Y$ is another topological space, then a~mapping $f\colon X\to Y$ is \emph{of the first Borel class} if $f^{-1}(U)\in \left(\Bos(X)\right)_\sigma$ for any open set $U\subseteq Y$, i.e., $f^{-1}(U)$ is a countable union of sets from $\Bos(X)$. Obviously, any semicontinuous function is of the first Borel class. It is proved in \cite[Theorem~2.3]{koumou} that any real-valued function of the first Borel class on a compact space has the point of continuity property.

We start the proof of Theorem~\ref{min} by the following result from \cite{ggms}.

\begin{lemma}
\label{ggms}
Let $Y$ be a compact convex set and $f\colon Y\to\er$ be an affine function such that the set $C(f)$ of its points of continuity is dense in $Y$. Then $C(f)\cap \ext Y$ is dense in $\ext Y$.
\end{lemma}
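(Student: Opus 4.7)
The plan is to examine the oscillation function $\omega := f^*-f_*$, where $f^*$ and $f_*$ denote the upper and lower semicontinuous envelopes of $f$. Because $f$ is affine, a standard limsup/liminf argument shows that $f^*$ is concave and upper semicontinuous, $f_*$ is convex and lower semicontinuous, so $\omega\geq 0$ is upper semicontinuous and concave on $Y$ with $\{\omega=0\}=C(f)$. Two structural consequences follow: first, by concavity, if $y=\lambda y_1+(1-\lambda)y_2\in C(f)$ with $\lambda\in(0,1)$, then $0=\omega(y)\geq\lambda\omega(y_1)+(1-\lambda)\omega(y_2)\geq 0$ forces $\omega(y_i)=0$, so $C(f)$ is a face of $Y$; second, Jensen's inequality for upper semicontinuous concave functions gives $\int_Y\omega\di\mu\leq\omega(r(\mu))$ for every $\mu\in\M^1(Y)$, whence $\mu(C(f))=1$ whenever $r(\mu)\in C(f)$.

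To conclude the density of $C(f)\cap\ext Y$ in $\ext Y$, I proceed by contradiction: suppose $U\subseteq Y$ is open with $U\cap\ext Y\neq\emptyset$ but $U\cap\ext Y\cap C(f)=\emptyset$. Pick $e\in U\cap\ext Y$ and, by Hahn--Banach, choose $\phi\in\fra^c(Y)$ with $\phi(e)>\sup_{Y\setminus U}\phi$. The maximum face $F:=\{y\in Y:\phi(y)=\max_Y\phi\}$ is then a nonempty closed face of $Y$ contained in $U$, and $\ext F=F\cap\ext Y\subseteq U\cap\ext Y$ is disjoint from $C(f)$. Given a point $y_1\in F\cap C(f)$, a maximal representing measure $\mu$ for $y_1$ would be concentrated on $F$ (since $F$ is a closed face) and on $C(f)$ (by the measure invariance just established), so $\mu$ lives on $F\cap C(f)$. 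On the other hand, maximality of $\mu$ together with the Bishop--de Leeuw theorem gives $\mu(B)=1$ for every Baire set $B\supseteq\ext Y$, which forces a genuine extreme point of $F$ inside the full-$\mu$-measure set $F\cap C(f)$, contradicting $\ext F\cap C(f)=\emptyset$.

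The hard part will be locating the point $y_1\in F\cap C(f)$: density of $C(f)$ in $Y$ does not by itself yield a continuity point inside the possibly thin face $F$. I would handle this by refining $\phi$ so that its maximum face already contains a pre-selected continuity point $y_0\in C(f)\cap U$ (which exists by density), exploiting the freedom left by Hahn--Banach to add a supporting functional at $y_0$ when such a functional exists. If every candidate $y_0$ lies in the relative interior of $Y$, then, since $C(f)$ is a face disjoint from the relative interior whenever it is a proper face, density of $C(f)$ forces $C(f)=Y$ and the lemma becomes trivial. The precise meaning of the pseudo-support claim for maximal measures in the non-metrizable setting is a secondary technical point, settled using Mokobodzki's characterization of maximal measures via Baire sets containing $\ext Y$.
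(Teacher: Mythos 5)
Your opening paragraph is sound and is in fact the right starting point: for an affine $f$ the oscillation $\omega=f^*-f_*$ is indeed nonnegative, upper semicontinuous and concave (modulo checking that $f$ is bounded, so that the envelopes are finite), $C(f)=\{\omega=0\}$, $C(f)$ is a face, and representing measures with barycenter in $C(f)$ are carried by $C(f)$. The separation step is also essentially correct, though you should justify $e\notin \ov{\co}(Y\setminus U)$ --- that requires Milman's theorem, not only Hahn--Banach. Note that the paper gives no proof of its own here: it cites \cite[Lemma II.2]{ggms}, whose argument stays at the level of $\omega$, slices and Milman's theorem, and never invokes maximal measures.

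The two steps you yourself flag as ``hard'' and ``secondary'' are both genuine gaps, and the second is fatal as written. First, locating $y_1\in F\cap C(f)$: density of $C(f)$ produces continuity points in every open slice $\{\phi>\max_Y\phi-\delta\}$, but upper semicontinuity of $\omega$ points the wrong way when you pass to the limit face $F=\{\phi=\max_Y\phi\}$, and the proposed repair does not survive infinite dimensions --- a compact convex set typically has empty interior in its affine hull, a point of $C(f)$ need not admit any supporting functional whose maximum face both contains it and stays inside $U$, and a proper face can be dense, so the dichotomy ending in ``$C(f)=Y$'' is unjustified. Second, even granting $y_1\in F\cap C(f)$, the concluding contradiction does not follow: you have a maximal $\mu$ with $\mu(F\cap C(f))=1$ and $F\cap C(f)\cap\ext Y=\emptyset$, but Mokobodzki's criterion only gives $\mu(B)=1$ for \emph{Baire} sets $B\supseteq\ext Y$, and the complement of $F\cap C(f)$ is an open set together with a countable union of closed sets, which need not be Baire in a non-metrizable compact space. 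Bishop and de Leeuw constructed a compact convex set carrying a maximal measure whose outer measure of the set of extreme points is $0$, so no general principle converts ``$\mu$ lives on a Borel set missing $\ext Y$'' into a contradiction. The missing idea is to exploit concavity of $\omega$ once more: if $\ext Y\cap S\subseteq\{\omega\geq\ep\}$ for a slice $S=\{\phi>\alpha\}\subseteq U$, then every $x\in S$ is a convex combination of a point of the compact convex set $\{\omega\geq\ep\}$ and a point of $\ov{\co}(Y\setminus S)$ with the first coefficient bounded below by some $\lambda_0>0$, whence $\omega\geq\lambda_0\,\ep$ on the nonempty open set $S$, contradicting density of $C(f)$. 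An argument in this spirit, rather than maximal measures, is what \cite{ggms} use.
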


\begin{proof}
See \cite[Lemma II.2]{ggms}.
\end{proof}

Below  we  use the following notation $\acseg\eqdef\co\{a,c\}\backslash \{a,c\}$,
where $a,c$ are points in a~vector space.
The following lemma is inspired by the proof of \cite[Proposition~3.1.1]{bourgin}.

\begin{lemma}
\label{triangle}
Let $E$ be a vector space, $a,b,c,c_1,c_2$ be distinct points in $E$ and $\Delta\eqdef\co \{a,c_1,c_2\}$. Let $f\colon \Delta\to \er$ be an affine function, $\eta\in \er$ and let
\begin{equation}\label{Fdef}
b\in F\eqdef\{x\in \Delta\setsep f(x)\ge \eta\},\quad a\notin F,\quad
c,c_1,c_2\in \{x\in \Delta\setsep f(x)>\eta\}.
\end{equation}
Let the following assumptions be satisfied.
\begin{itemize}
	\item [(A1)] The vectors $c_{1}-a$ and $c_1-c_2$ are linearly independent.
	\item [(A2)] We have $b\in\acseg$  and similarly  $c\in\ccseg.$
\end{itemize}
Then $b\notin \ext F$.
\end{lemma}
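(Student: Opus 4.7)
The plan is to exploit the two-dimensionality forced by (A1): the set $F$ is a small convex polygon whose extreme points can be written down explicitly, and (A2) will guarantee that $b$ is none of them.

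First, I would note that (A1) is equivalent to $a, c_1, c_2$ being affinely independent, so $\Delta=\co\{a,c_1,c_2\}$ is a non-degenerate triangle in a $2$-dimensional affine plane $\Pi\subseteq E$. Equipped with its natural topology, $\Pi$ makes the affine function $f|_\Delta$ continuous (an affine function on a simplex is determined by its values at the vertices and is continuous in barycentric coordinates). Therefore $F=\Delta\cap\{f\ge\eta\}$ is the intersection of the triangle with a closed half-plane, so it is a convex polygon.

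Next I would identify $\ext F$. Because $f(a)<\eta$ while $f(c_1),f(c_2)>\eta$, affineness of $f$ on the edge $[c_1,c_2]$ forces $f>\eta$ on that entire edge; hence the line $\{f=\eta\}$ meets only the two edges $[a,c_1]$ and $[a,c_2]$, each at a single interior point, say $p_1$ and $p_2$. Consequently
\[
F=\co\{p_1,c_1,c_2,p_2\},\qquad \ext F\subseteq\{p_1,c_1,c_2,p_2\}.
\]

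Finally I would check, using (A2), that $b$ is none of these four candidates. Write $b=(1-t)a+tc$ with $t\in(0,1)$ and $c=(1-s)c_1+sc_2$ with $s\in(0,1)$; then the barycentric coordinates of $b$ with respect to $(a,c_1,c_2)$ are
\[
\bigl(1-t,\; t(1-s),\; ts\bigr),
\]
all \emph{strictly positive}. By contrast $c_1,c_2$ and the points $p_1\in[a,c_1]$, $p_2\in[a,c_2]$ each lie on an edge of $\Delta$ and therefore have a zero barycentric coordinate. Uniqueness of barycentric coordinates (from affine independence of $a,c_1,c_2$) then yields $b\notin\{p_1,c_1,c_2,p_2\}$, and hence $b\notin\ext F$, as required.

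There is no real analytic obstacle in this argument; the only thing one must handle with some care is the case analysis showing that the level set $\{f=\eta\}$ intersects the boundary of $\Delta$ exactly in the two interior points $p_1,p_2$ (so that $F$ is genuinely a quadrilateral and not degenerate), but this follows immediately from the strict inequalities $f(a)<\eta<f(c_1),f(c_2)$ supplied by the hypothesis \eqref{Fdef}.
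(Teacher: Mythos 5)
Your proof is correct, but it follows a genuinely different route from the paper's. The paper normalizes $\eta=0$ and then splits into two cases: if $f(b)>0$ it exhibits $b$ as an interior point of the segment joining $c$ to the point $e\in\co\{a,c\}$ where $f$ vanishes (both in $F$), and if $f(b)=0$ it exhibits $b$ as an interior point of the segment joining the two points $b_1\in\co\{a,c_1\}$, $b_2\in\co\{a,c_2\}$ where $f$ vanishes, using (A1) only to guarantee $b_1\neq b_2$. You instead use (A1) up front to place everything in a genuine two-dimensional triangle, identify $F$ globally as the quadrilateral $\co\{p_1,c_1,c_2,p_2\}$ obtained by clipping $\Delta$ with the half-plane $\{f\ge\eta\}$, and rule out $b$ from the four candidate extreme points because its barycentric coordinates with respect to $(a,c_1,c_2)$, namely $(1-t,\,t(1-s),\,ts)$, are all strictly positive while each candidate has a vanishing coordinate. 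Your computation of these coordinates and the location of $p_1,p_2$ are right, and the strict inequalities $f(a)<\eta<f(c_1),f(c_2)$ from \eqref{Fdef} do ensure the quadrilateral is nondegenerate, so the argument is complete modulo the standard facts that an affine function on a nondegenerate simplex is linear in barycentric coordinates and that the extreme points of a convex hull of finitely many points lie among those points. What your approach buys is the elimination of the case split $f(b)>\eta$ versus $f(b)=\eta$ and a transparent geometric picture; what the paper's approach buys is a purely constructive, coordinate-free verification that produces the witnessing segment explicitly without invoking any polygon-clipping facts. Both establish exactly the stated conclusion.
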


\begin{proof}
	First, we may assume that $\eta=0.$
	Otherwise, it is enough to replace $F$ by $\tilde F\eqdef F-\eta$ in the following proof.

(i) Let $f(b)>0.$
As $c\in F$ holds by assumption \eqref{Fdef},
it is enough to show that
\begin{equation}\label{ddef}
b\in\ceseg \ \ \ \textrm{ where }\ \ \ \
e\eqdef\zlomek{f(c)\,a-f(a)\,c}{f(c)-f(a)}\in F. 
\end{equation}
Since $f(a)<0<f(c)$ holds by assumption~\eqref{Fdef}, we get that $e$ in \eqref{ddef} is well defined and that
$e\in\co\{a,c\}\subseteq \Delta$ as $c\in\co\{c_1,c_2\}$ holds by assumption (A2).
Since $f$ is assumed to be an~affine function, we get
that $f(e)=0$, which ensures that $e\in F$
and also that $e\neq c$ as $f(c)>0$ holds by \eqref{Fdef}.
By assumption (A2) there exists $\alpha\in(0,1)$ such that
we have the first equality in
$$
b=\alpha a+(1-\alpha)\,c=\varepsilon e +(1-\varepsilon)\, c, \ \ \textrm{ where }\ \
\varepsilon\eqdef \alpha[1-{f(a)}/{f(c)}]>0.
$$
The second equality can be derived from the definition of $e$ in \eqref{ddef},
and to finish the first part of the proof, it is enough to show that $\varepsilon<1$, but it follows from the following relations
 $0<f(b)=\alpha f(a)+(1-\alpha)f(c)$ based on affinity of $f.$

 (ii) If $f(b)=0$, it is enough show that
 \begin{equation}\label{bdef}
 b\in\bbseg,\ \ \textrm{ where }\ \
 b_i\eqdef \zlomek{f(c_i)\,a-f(a)\,c_i}{f(c_i)-f(a)}\in F.
 \end{equation}
 Similarly as in (i) we would verify that $b_i$'s are well defined elements of $\Delta$ such that $f(b_i)=0$, which ensures $b_i\in F,i=1,2.$
 Further, it follows from assumption (A2) that there are positive values $\gamma_1,\gamma_2$ and $\alpha\eqdef 1-\gamma_1-\gamma_2>0$
 such that we have the first equality in
 $$
 b=\alpha a+\gamma_1 c_1+\gamma_2c_2=\beta_1b_1+\beta_2 b_2,\ \ \textrm{ where }\ \ \beta_i\eqdef\gamma_i[1-f(c_i)/f(a)]>0.
 $$
 The second equality can be verified just by computation using the definitions of $\alpha,b_i,\beta_i$'s and the following equality
 $
 0=f(b)=\alpha f(a)+\gamma_1 f(c_1)+\gamma_2 f(c_2).
 $
This equality is the one that should be used together with the definition of $\alpha$ in order to verify that $\beta_1+\beta_2=1.$
Thus, in order to verify \eqref{bdef} it is enough to show that $b_1\neq b_2,$
which  can be shown to be equivalent to the following inequality
$$
[f(a)-f(c_1)](c_1-c_2)\neq [f(c_1)-f(c_2)](a-c_1).
$$
This is obviously satisfied by assumption (A1), since $f(a)<0<f(c_1)$ holds by assumption~\eqref{Fdef}.
\end{proof}

\begin{proof}[{\bf Proof of Theorem~\ref{min}}]
	Let $f\colon X\to \er$ be an affine function with the point of continuity property and let $f\le 0$ on $\ext X$. Our aim is to show that also $f\le 0$ on~$X.$
	In other words, we assume that the following set is disjoint from $\ext X$
	$$F_\eta{\eqdef}\{x\in X\setsep f(x)\ge \eta\}$$
whenever $\eta\in(0,\infty),$ and we are going to show that it is empty.

To this end, assume the contrary, i.e.,~that there exists $\eta>0$ such that $F_\eta\neq\emptyset.$
Then $Y{\eqdef}\ov{F_\eta}$ is a~non-empty compact convex set, which ensures that $\ext Y\neq \emptyset.$
Since the set $C(f|_{Y})$ of points of continuity of $f|_Y$ is of the second category in $Y$ (see \cite[Theorem~2.3]{koumou}),
it is dense in $Y,$ and
we get from Lemma~\ref{ggms} that $C(f|_Y)\cap\ext Y$ is a~dense subset of $\ext Y\neq\emptyset.$
Then 
there has to exist a point
\begin{equation}\label{bin}
b\in C(f|_Y)\cap \ext Y.
\end{equation}
Since $f{|_Y}$ is continuous at $b$ and $b\in Y=\ov{F_\eta}$, {we get that} $f(b)\ge \eta$, i.e., $b\in F_\eta$.
As $F_\eta$ is disjoint from $\ext X$, we get that
$b$ is not an extreme point of $X.$ Then we can find $a,e\in X$ such that $b\in\aeseg$.
Since $b\in F_\eta\cap \ext Y\subseteq \ext F_\eta$ and $f$ is affine, either
$f(a)<\eta$ and $f(e)\ge \eta$
or vice versa. We assume that the former case holds. Then even
\begin{equation}\label{formercase}
f(a)<\eta<f(e)
\end{equation}
holds, since otherwise if $f(e)=\eta$, we would obtain that $f(b)<\eta,$ which is impossible as $b\in F_\eta.$
Put
\begin{equation}
\label{endpoint}
c\eqdef e+t(e-a)\in X, \ \textrm{ where } \
t\eqdef \max\{s\ge 0\setsep e+s(e-a)\in X\}.
\end{equation}
As $f$ is affine, we get from \eqref{formercase}, \eqref{endpoint} that
$f(c)=f(e)+t[f(e)-f(a)]\geq f(e)>\eta.$
Hence, $c\in F_\eta.$
If $c\in \ext X,$ we have a~contradiction with our assumption that $F_\eta\cap\ext X=\emptyset.$
So let us assume the contrary, i.e.,~that $c\in\ccseg$ for some $c_1,c_2\in X$.
We may assume that $c_1,c_2$ are chosen so that $f(c_i)>\eta$, $ i=1,2$, since otherwise we would consider $\tilde c_i\eqdef c+\varepsilon(c_i-c)$ for $\varepsilon>0$ small enough instead.
By the choice of $c$ in \eqref{endpoint}, the vectors $c-a$ and $c_1-c_2$ are linearly independent,
which obviously means that the same holds with $c$ replaced by $c_1.$
Now we are at the situation of Lemma~\ref{triangle}.
Thus it follows that $b\in F\backslash\ext F,$ where $ F\eqdef F_\eta\cap \Delta$ and $\Delta\eqdef\co\{a, c_1,c_2\}$.
Then $b$ cannot be an~extreme point of any superset of $ F.$
In particular, $b\notin \ext Y$, where $Y=\ov{F_\eta}\supseteq F_\eta\supseteq F$, and we have a~contradiction with \eqref{bin},
which finishes the proof.
\end{proof}

Using the Hahn-Banach theorem we can obtain the following corollary.

\begin{cor}
\label{min-vector}
Let $X$ be a compact convex set.
\begin{itemize}
\item [(a)] If $F$ is a locally convex space and $f\colon X\to F$ is affine with the point of continuity property, then $f(X)\subseteq \ov{\co} f(\ext X)$.
\item [(b)] If $f\colon X\to \ce$ is affine and has the point of continuity property, then $\sup_{x\in X} \abs{f(x)}=\sup_{x\in \ext X} \abs{f(x)}$.
\end{itemize}
\end{cor}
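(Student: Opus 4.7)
The plan is to reduce both assertions to the real-valued minimum principle of Theorem~\ref{min} by composing $f$ with continuous $\er$-linear functionals; the Hahn-Banach theorem will supply enough such functionals in each setting.

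For part (a), I would argue by contradiction. Suppose some point $x_0\in X$ fails to lie in the closed convex set $K\eqdef \ov{\co}\, f(\ext X)$. The Hahn-Banach separation theorem, applied to $K$ and $f(x_0)$ in the locally convex space $F$, produces a continuous $\er$-linear functional $\varphi\colon F\to\er$ and a constant $\alpha\in\er$ with $\varphi\le \alpha$ on $K$ and $\varphi(f(x_0))>\alpha$. Set $g\eqdef \varphi\circ f-\alpha$. Then $g\colon X\to\er$ is affine, and I would check that it inherits the point of continuity property: for any closed $C\subseteq X$, a point of continuity of $f|_C$ (as a map into $F$) is immediately a point of continuity of $g|_C$ by continuity of $\varphi$. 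Since $f(\ext X)\subseteq K$, one has $g\le 0$ on $\ext X$, while $g(x_0)>0$, contradicting Theorem~\ref{min}.

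For part (b), set $M\eqdef \sup_{x\in \ext X}\abs{f(x)}$, which may be assumed finite. For each $\lambda\in\ce$ with $\abs{\lambda}=1$, the function $g_\lambda\eqdef \Re(\lambda f)-M\colon X\to\er$ is affine (treating $\ce$ as an $\er$-vector space) and inherits the point of continuity property from $f$ since $\Re$ and multiplication by $\lambda$ are continuous $\er$-linear operations. On $\ext X$ one has $\Re(\lambda f(x))\le \abs{f(x)}\le M$, so $g_\lambda\le 0$ there, and Theorem~\ref{min} yields $\Re(\lambda f(x))\le M$ throughout $X$. Taking the supremum over $\abs{\lambda}=1$ produces $\abs{f(x)}\le M$ for every $x\in X$, while the reverse inequality is trivial.

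There is no serious obstacle here: the only point that deserves a line of verification is that the point of continuity property is preserved under postcomposition with continuous $\er$-linear maps such as $\varphi$ or $\Re(\lambda\,\cdot\,)$, which is immediate from the definition. The argument is the standard Hahn-Banach reduction from vector-valued to scalar-valued statements, with both parts of the corollary falling out in parallel.
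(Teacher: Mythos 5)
Your proof is correct and takes essentially the same approach as the paper: part (a) is the identical Hahn--Banach separation argument followed by an appeal to Theorem~\ref{min}. For part (b) the paper simply invokes part (a) together with the observation that the closed disk of radius $\sup_{x\in\ext X}\abs{f(x)}$ is a closed convex superset of $f(\ext X)$, whereas you rerun the scalar reduction with the functionals $\Re(\lambda\,\cdot\,)$, $\abs{\lambda}=1$ --- a cosmetic difference only.
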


\begin{proof}
(a) We assume that there exists $x\in X$ such that $f(x)\notin \ov{\co} f(\ext X)$. By the Hahn-Banach theorem we can find $\tau\in F^*$ such that
\[
	\tau(f(x))>\eta\eqdef\sup \{\tau(z)\setsep z\in  \ov{\co} f(\ext X)\}\in\mathbb R.
\]
It straightforwardly follows from the definition that $\tau\circ f$ has the point of continuity property and the same holds for the function $\tau\circ f-\eta$ attaining values only in $(-\infty,0]$ on $\ext X.$
Then we obtain by Theorem~\ref{min}
\[
	\tau(f(x))=(\tau\circ f)(x)\le {\eta}
<\tau(f(x)),
\]
i.e., a contradiction.

(b)
We identify $\ce$ with $\er^2$. By (a)  we have for each $x\in X$ that
\[
	f(x)\in \ov{\co} f(\ext X)\subseteq {K_\eta\eqdef\{\lambda\in\ce\setsep \abs{\lambda}\le \eta\},
	\ \textrm{ where }\ \eta\eqdef \sup_{x\in{\ext} X} \abs{f(x)},}
\]
{since $K_\eta$ is obviously a~closed convex superset of $f(\ext X).$}
Thus $\abs{f(x)}\le \eta$, which finishes the proof.
\end{proof}

\section{A generalization of the Cohen-Chu theorem}

The aim of this part is a proof of a generalization of the result by C.\,H.~Chu and H.\,B.~Cohen in the spirit of the Banach-Stone theorem.
They proved  the following theorem (see \cite{cc}):

\emph{Let $X$ and $Y$ be compact convex sets and let $T\colon \fra^c(X) \to \fra^c(Y)$ be an isomorphism satisfying
$\|T\|\cdot\|T^{-1}\|<2$. If
\begin{itemize}
\item $X$ and $Y$ are metrizable and each point of $\ext X$ and $\ext Y$ is a weak peak point, or
\item the sets $\ext X$ and $\ext Y$  are closed and each extreme point of $X$ and $Y$ is a split face,
\end{itemize}
then the sets $\ext X$ and $\ext Y$ are homeomorphic.}

For a set $F\subseteq X$, the {\textit{complementary set}} $F^{\cs}$ is defined as the union of all faces of $X$ disjoint from $F$. A face $F$ of $X$ is said to be
a~\textit{split face} if its complementary set $F^{\cs}$ is convex (and hence a face, see \cite[p.\,132]{alfsen}) and every point in $X \setminus (F \cup F^{\cs})$ can be uniquely represented as a convex combination of a point in $F$ and a point in $F^{\cs}$.

We call $x\in \ext X$ a~\textit{weak peak point} if for each $\ep \in (0,1)$ and an open neighborhood $U$ of $x$ there exists $h \in \fra^c(X)$ such that
$$
\|h\| \leq 1,\  h(x) > 1 - \ep\ \textrm{ and }\ |h| < \ep\ \textrm{  on }\ \ext X \setminus U.
$$
Let us also recall that {if $x$ is} a~weak peak point of a compact convex set $X$, {then $\{x\}$} is a~split face and the converse holds if $\ext X$ is closed; see \cite[Proposition~1]{cc}.

We refer the reader to \cite[pp.\,72,\,73,\,75]{cc} for notions of the theory of compact convex sets (see also \cite[Section~4.3]{lmns}). We just mention that $X$ can be embedded to $(\fra^c(X))^*$ via the evaluation mapping $\phi\colon X\to (\fra^c(X))^*$ defined as $\phi(x)(f)=f(x)$, $f\in\fra^c(X)$, $x\in X$.
The dual unit ball $B_{(\fra^c(X))^*}$ equals the convex hull $\co (X\cup -X)$ and $(\fra^c(X))^*$ coincides with $\span X$, the linear span of $X$.
Further, any affine bounded function $f$ on $X$ has the unique extension to $\span X$, and this provides an identification of $(\fra^c(X))^{**}$ with the space $\fra^b(X)$ of all bounded affine functions {on~$X$.}

We use Theorem~\ref{min} to show the following theorem.

\begin{thm}
\label{cochu}
Let $X,Y$ be compact convex sets such that every extreme point of $X$ and $Y$ is a weak peak point and let ${T}\colon \fra^c(X)\to \fra^c(Y)$ be an isomorphism with $\norm{T}\cdot \norm{T^{-1}}<2$. Then $\ext X$ is homeomorphic to $\ext Y$.
\end{thm}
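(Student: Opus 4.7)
The plan is to follow the strategy of Cohen and Chu \cite{cc}, using the minimum principle (Theorem~\ref{min}) and specifically its Corollary~\ref{min-vector}(b), as a substitute for the use of either metrizability or closedness of $\ext X$ in their original proof. Via the canonical embedding $\phi\colon X\to(\fra^c(X))^*$ the dual unit ball decomposes as $B_{(\fra^c(X))^*}=\co(X\cup-X)$, and the adjoint $T^*\colon(\fra^c(Y))^*\to(\fra^c(X))^*$ is an isomorphism with $\norm{T^*}=\norm T$ and $\norm{(T^*)^{-1}}=\norm{T^{-1}}$. For each $y\in\ext Y$ the evaluation $\phi(y)$ is an extreme point of $B_{(\fra^c(Y))^*}$; the first task is to attach to $y$ a unique extreme point $\pi(y)\in\ext X$ together with a sign $\epsilon(y)\in\{\pm1\}$ such that $T^*\phi(y)$ is concentrated at $\epsilon(y)\phi(\pi(y))$.

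To build $\pi(y)$, I would exploit that $y$ is a weak peak point and take a net $(h_\alpha)\subseteq\fra^c(Y)$ indexed by pairs $(\ep,U)$ of positive reals and open neighbourhoods of $y$, with $\norm{h_\alpha}\le1$, $h_\alpha(y)\to1$, and $|h_\alpha|$ eventually below $\ep$ on $\ext Y\setminus U$. The preimages $g_\alpha\eqdef T^{-1}h_\alpha\in\fra^c(X)$ are uniformly bounded by $\norm{T^{-1}}$, and any weak$^*$ cluster point $g$ in $\fra^b(X)\cong(\fra^c(X))^{**}$ is affine and bounded. The crucial technical point is then to ensure, by a careful choice of subnet combined with the weak peak point property at points of $\ext X$, that $g$ additionally has the point of continuity property; at that stage Corollary~\ref{min-vector}(b) applies and yields $\sup_X|g|=\sup_{\ext X}|g|$. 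Combined with the sharp bound $\norm T\cdot\norm{T^{-1}}<2$, the weak peak point property singles out a unique $x\in\ext X$ with $|g(x)|=\sup_X|g|$ and a unique sign $\epsilon\in\{\pm1\}$ with $g(x)=\epsilon\sup_X|g|$; I set $\pi(y)\eqdef x$ and $\epsilon(y)\eqdef\epsilon$.

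The uniqueness is the principal obstacle: if two distinct extreme points $x_1,x_2\in\ext X$ both attained $|g(x_i)|=\sup_X|g|$, weak peak functions localised at $x_1$ and $x_2$ transferred through $T$ would produce two essentially disjoint near-peaks inside $Y$ whose appropriately weighted difference would contradict the extremality of $\phi(y)$ in $B_{(\fra^c(Y))^*}$ under the constraint $\norm T\cdot\norm{T^{-1}}<2$ — precisely the counting argument behind the sharp constant $2$ in Cohen-Chu's original theorem. Without the point of continuity property, the supremum of $|g|$ would not in general be attained on $\ext X$ and this identification would collapse; this is the exact place where Theorem~\ref{min} enters essentially, bypassing both the metrizability hypothesis (which provides maximal measures supported on $\ext X$) and the closedness hypothesis (which provides a split-face decomposition).

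Finally, running the symmetric construction with $T^{-1}$ in place of $T$ produces a map $\ext X\to\ext Y$ which I would verify is an inverse of $\pi$, so that $\pi$ is a bijection. Continuity of $\pi$ and of its inverse would then follow from weak$^*$-to-weak$^*$ continuity of $T^*$ on $\phi(\ext Y)$ combined with stability of the concentration characterisation of $\pi(y)$ under weak$^*$ limits, yielding the desired homeomorphism $\ext Y\cong\ext X$.
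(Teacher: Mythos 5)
There is a genuine gap at the point you yourself flag as ``the crucial technical point'': there is no mechanism by which a weak$^*$ cluster point $g$ of the bounded net $g_\alpha=T^{-1}h_\alpha$ in $\fra^b(X)\cong(\fra^c(X))^{**}$ can be guaranteed to have the point of continuity property. Passing to a subnet does not help --- cluster points of subnets are cluster points of the original net, and by Goldstine's theorem essentially \emph{every} bounded affine function on $X$ arises as a weak$^*$ limit of a bounded net from $\fra^c(X)$, so weak$^*$ convergence alone carries no descriptive information about the limit. Since the whole point of the theorem is to replace metrizability/closedness of the boundary by the minimum principle for functions with the point of continuity property, leaving this step unargued leaves the proof without its engine: without PCP for $g$, Corollary~\ref{min-vector}(b) does not apply and the identification of $\pi(y)$ collapses, as you note.

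The paper closes exactly this gap by a different device. Instead of taking limits of peak functions, it works with the concrete upper envelope $h_x=\widehat{\chi}_{\{x\}}$, which is upper semicontinuous and affine, hence strongly affine and of the first Borel class, and then proves (Claim~2 of the paper) that the bitranspose image $T^{**}h_x$ is \emph{still} strongly affine and of the first Borel class on $Y$. This preservation is nontrivial: it uses that $T^*Y\subseteq 2B_{(\fra^c(X))^*}=\co(2X\cup -2X)$ together with \cite[Lemma~2.4(b)]{spurnytams} (strong affinity extends to the convex hull) and \cite[Theorem~3.5(b)]{lusp} (first Borel class extends to the convex hull), after which \cite[Theorem~2.3]{koumou} gives PCP. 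The subsequent combinatorics (the maps $\tilde\rho$, $\tilde\tau$ defined by $|T^{**}h_x(y)|>c$ and $|(T^{-1})^{**}h_y(x)|>\frac12$, their surjectivity and mutual inverseness) also relies on the $\ell^1$-decomposition $(\fra^c(X))^*=\span\{x\}\oplus_{\ell^1}\span\{x\}^{\cs}$ coming from $\{x\}$ being a split face, which your sketch does not invoke; but the decisive missing ingredient is the descriptive-class preservation under $T^{**}$, and without it the approach as you describe it cannot be completed.
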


Example~1 on \cite[p.\,83]{cc} shows that Theorem~\ref{cochu} need not hold even for compact convex sets in finite dimensional spaces if we omit the assumption that extreme points are weak peak points. An example due to H.\,U.~Hess (see \cite{hess}) shows that \emph{for every $\ep>0$ there exist metrizable simplices $X$, $Y$ and an isomorphism $T\colon\fra^c(X)\to\fra^c(Y)$ such that $\|T\|\cdot\|T^{-1}\|<1+\ep$ and $\ext X$ is not homeomorphic to $\ext Y$.} Also, the bound $2$ is optimal by a result of H.\,B.~Cohen (see \cite{cohen2}). We also mention paper~\cite{lusppams} where Theorem~\ref{cochu} is proved under condition that the sets of extreme points are \lin.

\begin{proof}[{\bf Proof of Theorem~\ref{cochu}}]
We follow 
the proof of \cite[Theorem 1.1]{lusppams}. The 
main difference is that we use Theorem~\ref{min} instead of \cite[Lemma~2.1]{lusppams} and thus we need to verify its assumptions in Claim 2.
Let $T\colon \fra^c(X)\to \fra^c(Y)$ be an~isomorphism satisfying $\|T\|\cdot\|T^{-1}\|<2$. We may assume that
there exists $1<c'<2$~such that
\begin{equation}\label{TT}
	||T||<2\quad\text{and}\quad  \|Tf\|\geq c'\|f\| \ \textrm{ for all }\  f\in\fra^c(X).
	\end{equation}
Otherwise, we would find $1<c'<2$ such that $\|T\|\cdot \|T^{-1}\|<\frac{2}{c'}<2$ and
consider $\tilde T\eqdef c'\|T^{-1}\| T$ instead; see \cite[p.\,76]{cc}. Further, we fix $1<c<c'.$

\prt{Claim 1}

\begin{proclaim}
For any $f\in \fra^b(X)$ and $g\in\fra^b(Y)$ non-zero, $\|T^{**}f\|>c\|f\|$ and $\|(T^{-1})^{**}g\|>\frac 12\|g\|$.
\end{proclaim}

\begin{proof}[Proof of Claim 1]
See \cite[Proof of Claim 1]{lusppams}.
\end{proof}

If $x\in\ext X$, we recall that $(\fra^c(X))^*=\span \{x\}\oplus_{\ell^1} \span \{x\}^{\cs}$ because $\{x\}$ is a~split face; see \cite[p.\,72]{cc}.
Hence, given $y\in Y$, following \cite[p.\,76]{cc} we can write
\begin{equation}
\label{rozklad}
T^*y=\lambda x+\mu\quad\text{for some }\lambda\in\er\text{ and }\mu\in \span\{x\}^{\cs}.
\end{equation}
Similarly as in \cite[p.\,77]{cc}, for $y\in Y$ satisfying \eqref{rozklad}, we have that
\begin{equation}\label{implikace}
|\lambda|>c\ \ \Rightarrow \ \
	\|\mu\|=||T^*y||-|\lambda|<2-c.
\end{equation}
Given $x\in \ext X$, we denote by $\chi_{\{x\}}$ the characteristic function of the set $\{x\}$. Then the upper envelope function $h_x\eqdef\widehat{\chi}_{\{x\}}$, defined as
\[
\widehat{\chi}_{\{x\}}(z) \eqdef \inf \{ h(z): h \in \fra^c(X), h > \chi_{\{x\}} \} \quad \text{for } z \in X,
\]
is upper semicontinuous and affine (see \cite[p.\,73]{cc}), and thus strongly affine (see \cite[Theorem~1.6.1(ix)]{AE}).
Futher, we note (see \cite[p.\,77]{cc}) that
\begin{equation}\label{hxmu}
	\{x\}=h_x^{-1}\{1\}\ \ \textrm{ and }\ \
	\{x\}^{\cs}=h_x^{-1}\{0\}.
\end{equation}

\prt{Claim 2}

\begin{proclaim}
For any $x\in \ext X$, $T^{**}h_x$ is a strongly affine function of the first Borel class and thus it has the point of continuity property.
\end{proclaim}

\begin{proof}[Proof of Claim 2]
Since $T\colon \fra^c(X)\to \fra^c(Y)$, we have $T^*\colon \span Y\to \span X$. If $f\in\fra^b(X)$ and $\wh f$ is the linear extension of $f$ to $\span X$, then
$T^{**}f=\wh f\circ T^*$.
Since $\|T\|<2$,
\[
T^*Y\subseteq 2 B_{(\fra^c(X))^*}=\co (2X\cup -2X).
\]

The function $f\eqdef h_x$, being upper semicontinuous and affine, is strongly affine on $X$. The sets $2X$ and $-2X$ are affinely homeomorphic to $X$, and hence $\wh f$ is strongly affine on both of them. By \cite[Lemma~2.4(b)]{spurnytams}, $\wh{f}$ is strongly affine on
\[
2B_{(\fra^c(X))^*}=\co (2X\cup -2X).
\]
Since $Y$ is affinely homeomorphic to $T^*Y$ and $T^{**}f=\wh{f}\circ T^*$, we obtain that $T^{**}f$ is strongly affine on $Y$.

Further, $h_x$ is upper semicontinuous on $X$ and thus it is of the first Borel class on $X$. Since $2X$ and $-2X$ are affinely homeomorphic to $X$, $\wh{f}$ is of the first Borel class on $2X\cup -2X$.
Now we can use \cite[Theorem 3.5(b)]{lusp} to conclude that $\wh{f}$ is of the first Borel class on $2B_{(\fra^c(X))^*}=\co (2X\cup -2X)$. As above we obtain that $T^{**}h_x$ is of the first Borel class on $Y$.
The final statement now follows from \cite[Theorem~2.3]{koumou}.
\end{proof}

\noindent
{Similarly as in \cite[p.\,77]{cc} we consider mappings $\rho\colon\ext Y\to \ext X,\tau\colon\ext X\to \ext Y$  defined as follows
	\begin{equation}\label{tilderho}
	\tilde \rho\eqdef\{(y,x)\in \ext Y\times \ext X\setsep |T^{**} h_x(y)|>c\},
\end{equation}
	\begin{equation}
	\tilde \tau\eqdef\{(x,y)\in \ext X\times \ext Y\setsep |(T^{-1})^{**} h_y(x)|>\zlomek12\}.
\end{equation}
}
By \cite[p.\,77]{cc}, {$\tilde\rho$ is a~mapping and we denote its domain as $\tilde Y\eqdef\textrm{dom}(\tilde\rho).$
Analogously, we would get that also $\tilde\tau$ is a~mapping and we put $\tilde X\eqdef\textrm{dom}(\tilde\tau).$}

Note that if $x\in\ext X,y\in\ext Y\subseteq Y$ and $\lambda$ are as in \eqref{rozklad}, then for the linear extension $\wh{h}_{x}$ of $h_x$ on $\span X=(\fra^c(X))^*$ we have
\begin{equation}\label{Thxylambda}
T^{**}h_{x}(y)
=\wh{h}_x(T^*y)=\wh{h}_{x}(\lambda x+\mu) =\lambda h_x(x)+h_x(\mu)=\lambda
\end{equation}
 as $\wh{h}_{x}$ is linear and
$h_x(x)=1$ and $h_x(\mu)=0$ hold by \eqref{hxmu}.

\prt{Claim 3}
\begin{proclaim}
	The mappings ${\tilde\rho}\colon \tilde Y\to \ext X$ and ${\tilde\tau}\colon\tilde  X\to \ext Y$ are surjective.
\end{proclaim}

\begin{proof}[Proof of Claim 3]
Let $x\in \ext X$ be given and assume that $|T^{**}h_x(y)|\leq c$ for all $y\in \ext Y$. By Theorem~\ref{min} and Claim~2, $|T^{**}h_x|\leq c$ on $Y$. Then
\[
c\geq \|T^{**}h_x\|>c\|h_x\|=c
\]
gives a contradiction. Hence ${\tilde\rho}$ is surjective.
Analogously, using the second part of Claim~1 we would obtain that ${\tilde\tau}$ is surjective.
\end{proof}

The following claim is essentially Lemma~6 of \cite{cc} and Claim 4 in \cite{lusppams}. However, we recall its proof since it uses Theorem~\ref{min}.
\prt{Claim 4}
\begin{proclaim}
We have $\tilde X=\ext X$ and $\tilde Y=\ext Y$ and, for any $x\in \ext X$ and $y\in\ext Y$, $\tilde\rho(\tilde\tau(x))=x$ and $\tilde\tau(\tilde\rho(y))=y$.
\end{proclaim}

\begin{proof}[Proof of Claim 4]
	We will show that $(\tilde\rho(\tilde y),\tilde y)\in\tilde\tau$ holds for any $\tilde y\in\tilde  Y$, i.e.,
	\begin{equation}
		\label{rho} |(T^{-1})^{**}h_{\tilde y}({\tilde\rho}(\tilde y))|>\zlomek12.
\end{equation}
{By Claim~1, $\|(T^{-1})^{**}h_{\tilde y}\|>\zlomek12\,\|h_{\tilde y}\|=\zlomek12.$ Then}
Claim~2 and
Theorem~\ref{min} yield
\begin{equation}\label{ddefT}
	d\eqdef \sup_{\tilde x\in\ext X} |(T^{-1})^{**}h_{\tilde y}(\tilde x)|=\sup_{\tilde x\in X} |(T^{-1})^{**}h_{\tilde y}(\tilde x)|=\|(T^{-1})^{**}h_{\tilde y}\|>\zlomek12.
\end{equation}
Since $c > 1$, we have $d>\max\{\frac{d}{c},\zlomek12\}$. Hence, there exists $x\in\ext X$ such that
\begin{equation}\label{xyintau}
	|(T^{-1})^{**}h_{\tilde y}(x)|>\max\{\zlomek{d}{c},\zlomek12\}\geq \zlomek12, \ \textrm{ i.e. } (x,\tilde y)\in \tilde\tau.
\end{equation}
Let us assume that \eqref{rho} does not hold. Then ${\tilde\rho}(\tilde y)\neq x$,
and by Claim~3 we can find $y\in\tilde Y$ with ${\tilde\rho}(y)=x$.
Then $y\in \{\tilde y\}^{\cs}$, and thus $h_{\tilde y}(y)=0$.
Since $x\in\ext X$ and $y\in Y$, we can use decomposition \eqref{rozklad} in order to get that
\begin{equation}
	\label{nula} 0=h_{\tilde y}(y)= (T^{-1})^{**}h_{\tilde y}(T^*y)=(T^{-1})^{**}h_{\tilde y}(\lambda x )+(T^{-1})^{**}h_{\tilde y}(\mu).
\end{equation}
Since $\lambda=T^{**}h_{x}(y)$ holds by \eqref{Thxylambda}, and as $(y,x)\in\tilde\rho$, we get by \eqref{tilderho} that $|\lambda|>c$.
Then we get from \eqref{implikace}, \eqref{ddefT}, \eqref{xyintau} and \eqref{nula} that
\[
\aligned
d&<|\lambda|\zlomek{d}{c}<|\lambda|\cdot|(T^{-1})^{**}h_{\tilde y}(x)|=|(T^{-1})^{**}h_{\tilde y}(\lambda x )|\\
&=|(T^{-1})^{**}h_{\tilde y}(\mu)|\leq d\|\mu\|<d(2-c)<d.
\endaligned
\]
This is a~contradiction with assumption that \eqref{rho} does not hold, hence \eqref{rho} holds,
and we have that
\begin{equation}\label{inverz}
\tilde \tau(\tilde \rho(y))=y, \quad y\in\tilde Y.
\end{equation}
Now, let $x\in \ext X$ be given. By Claim~3 there exists $\tilde y\in\tilde Y$ with $\tilde\rho(\tilde y)=x$.
Then we get from \eqref{inverz} that $\tilde \tau(x)=\tilde y$, which ensures that $x\in \tilde X\subseteq \ext X$
and finally that also $\tilde X=\ext X$.

Let $y\in \ext Y$ be given. From Claim~3 we obtain first that there exists $x\in \tilde X=\ext X$ such that $\tilde \tau(x)=y$
and then  that there exists $\tilde y\in\tilde Y$ such that $\tilde \rho(\tilde y)=x$.
Then we get  from \eqref{inverz} that
$
y=\tilde \tau(x)=\tilde \tau(\tilde \rho(\tilde y))=\tilde y\in\tilde Y\subseteq\ext Y,
$
and finally that $\tilde Y=\ext Y.$

If $x\in \ext X=\tilde X$, it is enough to use Claim~3 once again and property \eqref{inverz} in order to get that
$
\tilde\rho(\tilde\tau(x))=\tilde\rho(\tilde\tau(\tilde\rho(y)))=\tilde\rho(y)=x
$
holds for some $y\in\ext Y.$
\end{proof}

By the proof of Theorem~7 on p.\,78 in \cite{cc}, the mappings $\tilde\rho$ and $\tilde\tau$ are continuous ($\tilde{\rho}$ and $\tilde{\tau}$ are denoted as $\rho$ and $\tau$ in \cite{cc}).
This finishes the proof of Theorem~\ref{cochu}.
\end{proof}

As in \cite[Corollaries 13 and 14]{cc} we obtain the following corollary.

\begin{cor}
Let $\A$ and $\B$ be function
algebras, and let $T\colon \Re\A\to \Re\B$ be an isomorphism satisfying $\|T\|\cdot
\|T^{-1}\|<2$. Then the Choquet boundaries of $\A$ and $\B$ are homeomorphic.
\end{cor}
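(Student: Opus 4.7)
The plan is to reduce the corollary to Theorem~\ref{cochu} by encoding each function algebra as a compact convex set whose continuous affine functions recover the real part of the algebra. For a function algebra $\A$ on a compact Hausdorff space $K_\A$, the real part $\Re\A\subseteq C(K_\A,\er)$ is a closed subspace containing the constant $1$ and separating points. Its state space
\[
X_\A\eqdef\{\phi\in(\Re\A)^*\setsep \phi(1)=1=\|\phi\|\},
\]
endowed with the weak$^*$ topology, is a compact convex set, and the evaluation map $\Phi_\A\colon \Re\A\to\fra^c(X_\A)$ given by $\Phi_\A(f)(\phi)=\phi(f)$ is an isometric isomorphism (the standard order-unit space representation). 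Constructing $X_\B$ analogously, the operator $\tilde T\eqdef \Phi_\B\circ T\circ \Phi_\A^{-1}\colon \fra^c(X_\A)\to \fra^c(X_\B)$ inherits $\|\tilde T\|\cdot\|\tilde T^{-1}\|<2$.

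The next task is to compare the two natural ``extremal'' objects: the Choquet boundary $\partial_\A K_\A$ of the function algebra, and the set $\ext X_\A$. The evaluation embedding $\iota_\A\colon K_\A\to X_\A$, $\iota_\A(x)\eqdef \delta_x|_{\Re\A}$, is continuous and, by the classical description of extreme states for function spaces containing $1$, restricts to a homeomorphism of $\partial_\A K_\A$ onto $\ext X_\A$. It then remains to verify that every $\iota_\A(x)\in\ext X_\A$ is a weak peak point of $X_\A$ in the sense of Section~2. Given $\ep\in(0,1)$ and a weak$^*$-open neighborhood $U$ of $\iota_\A(x)$, I would first use the continuity of $\iota_\A$ to find an open neighborhood $V$ of $x$ in $K_\A$ with $\iota_\A(V\cap \partial_\A K_\A)\subseteq U$, then invoke the standard peaking construction inside the function algebra $\A$ (as in \cite[Corollaries~13 and 14]{cc}) to produce $f\in\A$ with $\|f\|\le 1$, $\Re f(x)>1-\ep$ and $|f|<\ep$ on $\partial_\A K_\A\setminus V$; the function $\Phi_\A(\Re f)\in\fra^c(X_\A)$ is then the required weak peak function at $\iota_\A(x)$.

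With both $X_\A$ and $X_\B$ now known to have every extreme point a weak peak point, Theorem~\ref{cochu} applied to $\tilde T$ gives a homeomorphism $\ext X_\A\cong \ext X_\B$, which under the identifications $\ext X_\A\cong \partial_\A K_\A$ and $\ext X_\B\cong \partial_\B K_\B$ yields the desired homeomorphism of Choquet boundaries. I expect the only genuine obstacle to be the weak peak point verification: the definition of weak peak point is phrased in terms of weak$^*$-open neighborhoods in $X_\A$ and in terms of the full set $\ext X_\A$ (which need not be closed in $X_\A$), while the natural peaking functions for a function algebra live on the carrier space $K_\A$. Translating one into the other requires the continuity of $\iota_\A$ together with the absolute-value form of the peaking condition; once this is done, the rest is formal.
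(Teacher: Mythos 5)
Your proposal is correct and follows essentially the same route as the paper, which simply invokes the reduction carried out in \cite[Corollaries~13 and 14]{cc}: represent $\Re\A$ isometrically as $\fra^c$ of its state space, identify the Choquet boundary with the set of extreme points, use the classical fact that Choquet boundary points of a function algebra are generalized peak points to verify the weak peak point hypothesis, and apply Theorem~\ref{cochu}. The one point worth recording explicitly is the neighborhood translation you already flag (pulling a weak$^*$ neighborhood in the state space back to the carrier space via the evaluation embedding), which is exactly how \cite{cc} handles it.
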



\end{document}